\pgfplotsset{compat=1.17}
\pgfplotsset{compat=newest}
\newtheorem{lemma}{Lemma}
\theoremstyle{plain}%
\newtheorem{theorem}{Theorem}%  meant for continuous numbers
\newtheorem{proposition}[theorem]{Proposition}% 
\theoremstyle{plain}%
\theoremstyle{plain}%
\newtheorem{definition}{Definition}%
\begin{document}

\title[Article Title]{\textbf{Mean Value \text{-}Mann Iterative Process}}

%%=============================================================%%
%% GivenName	-> \fnm{Joergen W.}
%% Particle	-> \spfx{van der} -> surname prefix
%% FamilyName	-> \sur{Ploeg}
%% Suffix	-> \sfx{IV}
%% \author*[1,2]{\fnm{Joergen W.} \spfx{van der} \sur{Ploeg} 
%%  \sfx{IV}}\email{iauthor@gmail.com}
%%=============================================================%%

\author*[1]{\fnm{Mohd} \sur{Tariq}}\email{mohd.tariq@myyahoo.com}

\author[2]{\fnm{Mayank} \sur{Sharma}}\email{mohd.tariq@myyahoo.com, mayank.sharma@vitbhopal.ac.in}

\affil[]{\orgdiv{Division of Mathematics School of Advanced Sciences and Languages}, \orgname{VIT Bhopal
University}, \orgaddress{\street{Kothri Kalan}, \city{Sehore}, \postcode{466114}, \state{Madhya Pradesh}, \country{India}}}

%%==================================%%
%% Sample for unstructured abstract %%
%%==================================%%

\abstract{In this paper, the Mean value iterative process is modified with the Mann iterative process for mean nonexpansive mapping in a hyperbolic metric space that satisfy the symmetry criteria and in uniformly
convex hyperbolic spaces to validate the iterative process, we present strong and \(\Delta\)-convergence theorems.}

\keywords{Hyperbolic Space, Mann Iteration, Mean Value Iteration, Mean Nonexpansive Mapping \newline \textbf{Mathematics Subject Classification:} 47H09, 47H10}

%%\pacs[JEL Classification]{D8, H51}

%%\pacs[MSC Classification]{35A01, 65L10, 65L12, 65L20, 65L70}

\maketitle

\section{Introduction}\label{sec1}
Let $(\mathcal{S}, d)$ be a metric space, defined a mapping $\daleth : \mathcal{A} \to \mathcal{A}$ on a $\mathcal{A \neq \emptyset}$ closed convex subset of $\mathcal{S}$ is said to be 

\begin{enumerate}[label=\textbf{\roman*.}]
     \item \text{Nonexpansive} if \[ d(\daleth\varkappa_1, \daleth\varkappa_2) \leq d(\varkappa_1, \varkappa_2) \quad \textit{for all} \quad \varkappa_1, \varkappa_2\in \mathcal{A} \]
      \item \text{Mean Nonexpansive Mapping}, for \(a,b \geq0\) and \(a+b \leq1\) if \[ d(\daleth\varkappa_1, \daleth\varkappa_2) \leq a \, d(\varkappa_1, \varkappa_2) + b \, d(\varkappa_1, \daleth\varkappa_2) \quad \textit{for all} \quad \varkappa_1, \varkappa_2 \in \mathcal{A}\]
\end{enumerate}

\noindent The idea of mean nonexpansive mappings was presented by Zhang \cite{zhang1975fixed}, who showed that such mappings has a fixed point within a weakly compact convex subset \(\mathcal{A}\) of a Banach space, provided the subset has a normal structure. The study of mean nonexpansive mappings in Banach spaces has since been undertaken by a number of scholars. Some characteristics of mean nonexpansive mappings were examined by Wu and Zhang \cite{wu2007fixed}.  They proved that the mean nonexpansive mapping has a fixed point if \(a + b < 1\). For mean nonexpansive mappings, Zhao \cite{hanbin22existence} showed that the Picard iterative process \cite{picard1890memoire}, \( \varkappa_{n+1}=\daleth\varkappa_n\) and Mann iterative process \cite{mann1953mean}, \(\varkappa_{n+1}=(1-\alpha_n) \varkappa_n+\alpha_n \daleth\varkappa_n\), where \(\{\alpha_n\} \in [0,1]\) converge.  In the context of uniformly convex Banach spaces. In 2007, Gu and Li \cite{gu2007approximating} demonstrated the convergence of the Ishikawa iterative process \cite{ishikawa1974fixed},  \(\varkappa_{n+1}=(1-\alpha_n) \varkappa_n+\alpha_n T((1-\beta_n)\varkappa_n+\beta_n \daleth(\varkappa_n))\), where \(\{\alpha_n\},\{\beta_n\} \in [0,1]\) for mean nonexpansive mappings. 

\vspace{2.4mm}

In 2012, Fixed point results for mean nonexpansive mappings in Hilbert spaces were examined by Ouahab et al. \cite{ouahab2012fixed} and they also established the Chebyshev radius and the modulus of convexity. Zuo \cite{zuo2014fixed} showed in 2014 that mean nonexpansive mappings have approximation fixed point sequences and given the right circumstances gave existence and uniqueness theorems for fixed points. Zhou and Cui \cite{zhou2015fixed} used the Ishikawa iterative process to introduce and study mean nonexpansive mappings in \text{CAT}(0) spaces in 2015. They demonstrated $\Delta$-convergence and strong theorems for the sequence $\{\varkappa_n\}$ produced by the previous iteration. In 2017 Chen et al. \cite{chen2017fixed} first proposed the idea of mean nonexpansive set-valued mappings in Banach spaces.  They expanded Lim's and Nadler's fixed point theorems to include mean nonexpansive set-valued mappings. In 2018, Akbar et al. \cite{abkar2018approximation} developed a new iterative process to approximate the fixed point of mean nonexpansive mappings in \text{CAT}(0) spaces and established convergence results.  In 2021, Ahmad et al. \cite{ahmad2021approximation} proved weak and strong convergence theorems for mean non-expansive mappings in Banach spaces using the Picard-Mann hybrid iterative process. In 2022, Izuchukwu et al. \cite{izuchukwu2021approximating} addressed fixed point qualities and the demiclosedness principle for mean non-expansive mappings in uniformly convex hyperbolic spaces.  Abbas and Nazir \cite{abbas2014new} devised an iterative approach for approximating a common fixed point of two mean nonexpansive mappings, resulting in strong and $\Delta$-Convergence theorems.

\vspace{2.4mm}

To approximate the fixed points of nonlinear mappings in Hilbert and Banach spaces, a number of fixed point results and iterative processs have been created because of their convex structures. Takahashi \cite{takahashi1970convexity} originally established the idea of convex metric spaces by examining the fixed points for nonexpansive mappings.  Different convex structures on metric spaces have since been introduced in a number of attempts.  The hyperbolic space is a metric space that has a convex structure.  As a result of the introduction of various convex structures on hyperbolic spaces, hyperbolic spaces have been defined differently. Kohlenbach \cite{kohlenbach2005some} established a class of hyperbolic spaces that is more structured and better suitable for some applications than the class of hyperbolic spaces. The hyperbolic spaces presented in \cite{kohlenbach2005some, goebel1983iteration, reich1990nonexpansive} include Banach spaces and CAT(0) spaces for mean nonexpansive mappings, results on fixed point problems for mean nonexpansive mappings from uniformly convex Banach spaces and CAT(0) spaces must therefore be extended to uniformly convex hyperbolic spaces, since the class of uniformly convex hyperbolic spaces generalize uniformly convex Banach spaces and CAT(0) spaces.

\section{Preliminaries}
Some definitions and results as part of the structure of hyperbolic spaces. 

\vspace{2.4mm}

\begin{definition}
    \cite{kohlenbach2005some} A metric space $(\mathcal{S},d)$ with a convex mapping $\mathcal{C} : \mathcal{S}^2 \times [0,1] \to \mathcal{S}$ $\forall \varkappa_1,\varkappa_2,\varkappa_3,\varkappa_4 \in \mathcal{S}$ and $a, b \in [0,1]$ satisfying
\begin{itemize}
    \item[(i)] $d(v,\mathcal{C}(\varkappa_1,\varkappa_2,a)) \leq (1-a)d(v,\varkappa_1) + a d(v,\varkappa_2),$
    \item[(ii)] $d(\mathcal{C}(\varkappa_1,\varkappa_2,a), \mathcal{C}(\varkappa_1,\varkappa_2,b)) = |a - b|d(\varkappa_1,\varkappa_2),$
    \item[(iii)] $\mathcal{C}(\varkappa_1,\varkappa_2,a) = \mathcal{C}(\varkappa_2,\varkappa_1,1-a),$
    \item[(iv)] $d(\mathcal{C}(\varkappa_1,\varkappa_3,a), \mathcal{C}(\varkappa_2,\varkappa_4,a)) \leq (1-a)d(\varkappa_1,\varkappa_2) + a d(\varkappa_3,\varkappa_4)$
\end{itemize}
is called a hyperbolic space $(\mathcal{S},d,\mathcal{C}).$
\end{definition}

\vspace{2.4mm}

\begin{definition}
\cite{suanoom2016remark} Assume $\mathcal{S}$ is a real Banach space with the norm $\|.\|$ and define $d : \mathcal{S}^2 \to [0,\infty)$ by \(d(\varkappa_1,\varkappa_2) = \|\varkappa_1 - \varkappa_2\|.\) Then $(\mathcal{S},d,\mathcal{C})$ is a hyperbolic space with $\mathcal{C} : \mathcal{S}^2 \times [0,1] \to \mathcal{S}$ defined by $\mathcal{C}(\varkappa_1,\varkappa_2,a) = (1-a)\varkappa_1 + a \varkappa_2$.
\end{definition}

\vspace{2.4mm}

\begin{definition}
    \cite{suanoom2016remark} Assume $\mathcal{S}$ is a hyperbolic space with $\mathcal{C} : \mathcal{S}^2 \times [0,1] \to \mathcal{S}$ 
\begin{itemize}
    \item[(i)] $\mathcal{A} (\neq \emptyset) \subseteq \mathcal{S}$, for all $\varkappa_1,\varkappa_2 \in \mathcal{A}$ and $a \in [0,1]$ is called a convex if $ \mathcal{C}(\varkappa_1,\varkappa_2,a) \in \mathcal{A}$ .
    \item[(ii)] If $\epsilon \in (0,2]$, and for any $r > 0 \exists$ a $\delta \in (0,1]$  $\forall \varkappa_1,\varkappa_2,\varkappa_3 \in \mathcal{S}$ such that 
    \(d(\mathcal{C}(\varkappa_1,\varkappa_2,\dfrac{1}{2}),\varkappa_3) \leq (1-\delta)r\) if given $d(\varkappa_1,\varkappa_3) \leq r, d(\varkappa_2,\varkappa_3)) \leq r,$ and $d(\varkappa_1,\varkappa_2) \geq \epsilon r$ $\mathcal{S}$ then is said to be uniformly convex.
    \item[(iii)] If $\epsilon \in (0,2]$ and for every $r > 0$, a modulus of uniform convexity of $\mathcal{S}$ is defined $\eta : (0,\infty) \times (0,2] \to (0,1]$ by $\delta = \eta(r,\epsilon)$. $\eta$ is monotone if decreases with $r$ (for a given $\epsilon$). 
\end{itemize}
\end{definition}

\vspace{2.4mm}  

\begin{definition}
     Suppose $\mathcal{A}\neq \emptyset$ subset of a metric space $\mathcal{S}$ and any bounded sequence $\{\varkappa_n\} \in \mathcal{A}$ for $\varkappa \in \mathcal{S}$ defined a continuous functional $r(.,\{\varkappa_n\}): \mathcal{S} \to [0,\infty)$ by \( r(\varkappa,\{\varkappa_n\}) = \limsup_{n \to \infty} d(\varkappa_n,\varkappa).\)
\begin{itemize}
    \item[(i)] The asymptotic radius \(r(\mathcal{A},\{\varkappa_n\}) = \inf \{r(\varkappa,\{\varkappa_n\}): \varkappa \in \mathcal{A}\}\) of $\{\varkappa_n\}$ w.r.t. $\mathcal{A}.$ 
    \item[(ii)] If \(r(\varkappa,\{\varkappa_n\}) = \inf \{r(y,\{\varkappa_n\}): y \in \mathcal{A}\}\) then $\varkappa \in \mathcal{A}$ is an asymptotic center of $\{\varkappa_n\}$ w.r.t. $\mathcal{A} \subseteq \mathcal{S}.$ 
    \item[(iii)] The set $\mathbf{C}(\mathcal{A},\{\varkappa_n\})$ consists of all asymptotic centers of $\{\varkappa_n\}$ w.r.t. $\mathcal{A}.$
    \item[(iv)] The asymptotic radius $r(\{\varkappa_n\})$ and the asymptotic center $\mathbf{C}(\{\varkappa_n\})$ w.r.t $\mathcal{S}$.
\end{itemize}
\end{definition}   

\vspace{2.4mm}  

\begin{definition}
    \cite{kirk2008concept} For any subsequence $\{\varkappa_{n_k}\}$ of $\{\varkappa_n\}$, if there is a unique asymptotic center $\varkappa$  of $\{\varkappa_{n_k}\}$ then $\{\varkappa_n\} \in \mathcal{S}$ is said to be $\Delta$-converge to $\varkappa \in \mathcal{S}$,   denoted by $\Delta$-$\lim\limits_{n \to \infty} \varkappa_n = \varkappa$.
\end{definition}

\vspace{2.4mm}   

\noindent \textbf{Remark 1.} \cite{kuczumow1978almost} In the the Banach space, weak convergence and $\Delta$-convergecoincideide with the usual Opial property.

\vspace{2.4mm}  

\begin{lemma}
\cite{leustean2008nonexpansive} Suppose $\mathcal{A} \neq \emptyset$ closed convex subset of $\mathcal{S}$ and $\mathcal{S}$ with monotone modulus of uniform convexity $\eta$ is a complete uniformly convex hyperbolic space, then any bounded sequence $\{\varkappa_n\} \in \mathcal{S}$ has a unique asymptotic center w.r.t. any \(\mathcal{A}\).
\end{lemma}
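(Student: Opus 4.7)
The plan is to establish both existence and uniqueness of the asymptotic center by exploiting uniform convexity at midpoints together with the monotonicity of $\eta$. Set $r := r(\mathcal{A}, \{\varkappa_n\})$, which is finite since $\{\varkappa_n\}$ is bounded and $\mathcal{A}$ is nonempty. Choose a minimizing sequence $\{y_k\} \subset \mathcal{A}$ with $r_k := r(y_k, \{\varkappa_n\}) \downarrow r$.

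The heart of the proof is to show that $\{y_k\}$ is Cauchy. Assume it were not: then there would exist $\epsilon_0 > 0$ and indices $i_j < k_j$ with $d(y_{i_j}, y_{k_j}) \geq \epsilon_0$. The midpoints $z_j := \mathcal{C}(y_{i_j}, y_{k_j}, 1/2)$ lie in $\mathcal{A}$ by convexity. For $n$ sufficiently large (depending on $j$), $d(\varkappa_n, y_{i_j}) \leq r_{i_j} + 1/j$ and $d(\varkappa_n, y_{k_j}) \leq r_{k_j} + 1/j$; set $R_j := \max(r_{i_j}, r_{k_j}) + 1/j$, so that $R_j \to r$. Invoking uniform convexity with radius $R_j$ and separation parameter $\epsilon_0 / R_j$ yields $d(\varkappa_n, z_j) \leq (1 - \eta(R_j, \epsilon_0 / R_j)) R_j$ for every large $n$. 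Using the monotonicity of $\eta$ in its first argument, one bounds $\eta(R_j, \epsilon_0 / R_j)$ from below by a fixed positive constant $\delta_0 > 0$ independent of $j$; taking $\limsup$ in $n$ then gives $r(z_j, \{\varkappa_n\}) \leq (1 - \delta_0) R_j$, which drops strictly below $r$ once $j$ is large, contradicting the infimum definition of $r$.

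With the Cauchy property secured, completeness of $\mathcal{S}$ and closedness of $\mathcal{A}$ produce a limit $y \in \mathcal{A}$; from the triangle inequality $|r(y_k, \{\varkappa_n\}) - r(y, \{\varkappa_n\})| \leq d(y_k, y) \to 0$, so $r(y, \{\varkappa_n\}) = r$ and $y$ is an asymptotic center. For uniqueness, suppose $y_1, y_2 \in \mathbf{C}(\mathcal{A}, \{\varkappa_n\})$ were distinct; the midpoint $z^\ast := \mathcal{C}(y_1, y_2, 1/2)$ lies in $\mathcal{A}$, and the same uniform-convexity estimate, applied with separation parameter determined by $d(y_1, y_2) > 0$, yields $r(z^\ast, \{\varkappa_n\}) < r$, again contradicting the definition of $r$.

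The main obstacle is the Cauchy argument: the uniform-convexity inequality has to be applied at each $j$ with a radius $R_j$ that shifts with $j$, and it is precisely the monotonicity hypothesis on $\eta$ (together with the uniform lower bound $\epsilon_0$ on the separations) that permits extracting a positive constant $\delta_0$ independent of $j$. Without this monotonicity, the would-be contradiction could collapse as the effective $\delta$-values degenerate to $0$.
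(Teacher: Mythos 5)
The paper does not prove this lemma at all: it is imported verbatim from the cited reference \cite{leustean2008nonexpansive}, so there is no in-paper argument to compare against. Your proof is the standard one from that source and is correct in outline: take a minimizing sequence for the asymptotic radius $r$, use the midpoint form of uniform convexity to show it is Cauchy, pass to the limit via completeness and closedness, and reuse the same midpoint estimate for uniqueness. One detail deserves tightening. When you invoke uniform convexity at stage $j$ with radius $R_j$ and separation parameter $\epsilon_0/R_j$, \emph{both} arguments of $\eta$ vary with $j$, while the stated monotonicity hypothesis controls only the first. The standard fix is to fix a single bound $R^\ast \geq R_j$ for all large $j$ (possible since $R_j \to r$) and apply the definition with the \emph{fixed} parameter $\epsilon := \epsilon_0/R^\ast$, which is legitimate because $d(y_{i_j}, y_{k_j}) \geq \epsilon_0 \geq (\epsilon_0/R^\ast) R_j$; monotonicity in the first argument then gives $\eta(R_j, \epsilon_0/R^\ast) \geq \eta(R^\ast, \epsilon_0/R^\ast) =: \delta_0 > 0$, which is the uniform constant you need. (Note also that $\epsilon_0/R^\ast \leq 2$ automatically by the triangle inequality, so the parameter stays in the admissible range, and the degenerate case $r=0$ forces the Cauchy property directly.) With that adjustment your argument is complete and coincides with the proof in the cited literature.
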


\vspace{2.4mm}  

\begin{lemma}
    \cite{chang2014delta} Suppose $\mathcal{S}$ is a complete uniformly convex hyperbolic space with monotone modulus of uniform convexity $\eta$ and $\{\varkappa_n\} \in \mathcal{S}$ is a bounded sequence with $\mathbf{C}(\{\varkappa_n\}) = \{\varkappa\}$, take any subsequence $\{x_{n_k}\}$ of $\{\varkappa_n\}$ with $\mathbf{C}(\{x_{n_k}\}) = \{\varkappa_1\}$ and $\{d(\varkappa_n,\varkappa_1)\}$ converges, then $\varkappa = \varkappa_1$.
\end{lemma}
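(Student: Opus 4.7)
The plan is to argue by contradiction: assume $\varkappa \neq \varkappa_1$ and exploit the uniqueness of the asymptotic center of $\{\varkappa_n\}$ (guaranteed by the previous lemma, since $\mathcal{S}$ is a complete uniformly convex hyperbolic space with monotone modulus $\eta$). The heart of the matter is a $\limsup$ comparison that lets us transfer inequalities between the subsequence and the full sequence.

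First, I would unpack what is given. Uniqueness of $\varkappa_1$ as the asymptotic center of the subsequence $\{x_{n_k}\}$ means that for every $z \in \mathcal{S}$ with $z \neq \varkappa_1$ one has
\[
r(\varkappa_1, \{x_{n_k}\}) < r(z, \{x_{n_k}\}).
\]
Next, since $\{d(\varkappa_n, \varkappa_1)\}$ is assumed to converge, its limit equals its $\limsup$ and also equals the $\limsup$ along any subsequence. In particular,
\[
r(\varkappa_1, \{\varkappa_n\}) = \limsup_{n \to \infty} d(\varkappa_n, \varkappa_1) = \limsup_{k \to \infty} d(x_{n_k}, \varkappa_1) = r(\varkappa_1, \{x_{n_k}\}).
\]

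The second ingredient is the general fact that the $\limsup$ on any subsequence is bounded above by the $\limsup$ on the full sequence, applied at the point $\varkappa$:
\[
r(\varkappa, \{x_{n_k}\}) = \limsup_{k \to \infty} d(x_{n_k}, \varkappa) \leq \limsup_{n \to \infty} d(\varkappa_n, \varkappa) = r(\varkappa, \{\varkappa_n\}).
\]

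Chaining these two observations with the strict inequality coming from uniqueness (applied to $z = \varkappa$) yields
\[
r(\varkappa_1, \{\varkappa_n\}) = r(\varkappa_1, \{x_{n_k}\}) < r(\varkappa, \{x_{n_k}\}) \leq r(\varkappa, \{\varkappa_n\}),
\]
which contradicts the fact that $\varkappa$ is the (unique) asymptotic center of $\{\varkappa_n\}$. Hence $\varkappa = \varkappa_1$. There is no real obstacle here; the only subtlety is realizing that the hypothesis that the full sequence of distances $\{d(\varkappa_n, \varkappa_1)\}$ converges is exactly what is needed to upgrade the subsequential $\limsup$ at $\varkappa_1$ to the full $\limsup$, making the comparison with $\varkappa$ go through.
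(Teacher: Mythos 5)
Your argument is correct and complete: the convergence of $\{d(\varkappa_n,\varkappa_1)\}$ is used exactly where it is needed, namely to force $r(\varkappa_1,\{\varkappa_n\}) = r(\varkappa_1,\{x_{n_k}\})$, and the chain $r(\varkappa_1,\{\varkappa_n\}) = r(\varkappa_1,\{x_{n_k}\}) < r(\varkappa,\{x_{n_k}\}) \leq r(\varkappa,\{\varkappa_n\})$ does contradict the minimality of $r(\varkappa,\{\varkappa_n\})$. Note that the paper itself gives no proof of this lemma --- it is quoted from the cited reference --- but your reasoning is the standard argument for this result in the literature, so there is nothing to compare beyond confirming that your proposal is sound.
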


\vspace{2.4mm}

\begin{lemma}
\cite{khan2012implicit} Assume that $(\mathcal{S},d,\mathcal{C})$ with monotone modulus of uniform convexity $\eta$, $z^* \in \mathcal{S}$ and $\{\varkappa_n\} \in [a,b]$ for some $a,b \in (0,1)$ is a complete uniformly convex hyperbolic space if $\{u_n\},\{v_n\} \in \mathcal{S}$ such that
\[
\limsup_{n \to \infty} d(u_n,z^*) \leq c, \quad \limsup_{n \to \infty} d(v_n,z^*) \leq c, \quad \text{and} \quad \lim\limits_{n \to \infty} d(\mathcal{C}(u_n,v_n,\varkappa_n),z^*) = c,
\]
for some $c > 0$, then $\lim\limits_{n \to \infty} d(u_n,v_n) = 0$.
\end{lemma}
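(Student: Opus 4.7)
The plan is to argue by contradiction. Suppose $\lim_{n\to\infty} d(u_n, v_n) \neq 0$; then there exist $\epsilon_0 > 0$ and a subsequence $\{n_k\}$ with $d(u_{n_k}, v_{n_k}) \geq \epsilon_0$ for every $k$. Since $\{\varkappa_n\} \subset [a,b]$ is bounded, passing to a further subsequence we may assume $\varkappa_{n_k} \to \alpha \in [a,b]$. The eventual target is to route the argument through the midpoint $m_n := \mathcal{C}(u_n, v_n, \tfrac{1}{2})$: I will show $\lim d(m_{n_k}, z^*) = c$ and then exploit uniform convexity of $\mathcal{S}$ to produce a strict contraction of that midpoint, incompatible with $c > 0$.

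The first step upgrades the one-sided hypotheses to genuine limits $\lim d(u_n, z^*) = \lim d(v_n, z^*) = c$. Property (i) of Definition~1 gives
\[
d(\mathcal{C}(u_n, v_n, \varkappa_n), z^*) \leq (1-\varkappa_n)\, d(u_n, z^*) + \varkappa_n\, d(v_n, z^*).
\]
If $\liminf d(u_n, z^*) < c$ along some subsequence, then, since $1-\varkappa_n \geq 1-b > 0$, the right-hand side stays strictly below $c$ infinitely often, contradicting the third hypothesis. A symmetric application using $\varkappa_n \geq a > 0$ handles $v_n$.

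Next, I propagate these limits to $\lim d(m_{n_k}, z^*) = c$. The $\limsup$ bound is immediate from (i) with parameter $\tfrac{1}{2}$. For the $\liminf$ bound, by (iii) we may, after another subsequencing, assume $\varkappa_{n_k} \leq \tfrac{1}{2}$; using (ii) and (iv) one shows that $\mathcal{C}(u_{n_k}, m_{n_k}, 2\varkappa_{n_k})$ shares the same distances from $u_{n_k}$ and from $m_{n_k}$ as $\mathcal{C}(u_{n_k}, v_{n_k}, \varkappa_{n_k})$, and in a uniformly convex (hence uniquely geodesic) hyperbolic space the two points coincide. Then (i) gives
\[
d(\mathcal{C}(u_{n_k}, v_{n_k}, \varkappa_{n_k}), z^*) \leq (1-2\varkappa_{n_k}) d(u_{n_k}, z^*) + 2\varkappa_{n_k} d(m_{n_k}, z^*),
\]
and combining this with Step~1 and $\alpha > 0$ forces $\liminf d(m_{n_k}, z^*) \geq c$.

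Finally, uniform convexity closes the argument. Fix $\tau > 0$ small; for $k$ large enough $d(u_{n_k}, z^*), d(v_{n_k}, z^*) \leq c + \tau$ while $d(u_{n_k}, v_{n_k}) \geq \epsilon_0$, so Definition~3(ii) yields
\[
d(m_{n_k}, z^*) \leq \bigl(1 - \eta(c+\tau, \epsilon_0/(c+\tau))\bigr)(c+\tau),
\]
and the monotonicity of $\eta$ in its first argument gives a uniform positive lower bound $\eta_0$ independent of $k$. Letting $k \to \infty$ then produces $c \leq (1-\eta_0)(c+\tau)$, which for $\tau$ small enough contradicts $c > 0$. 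I expect the main obstacle to be the interpolation in Step~2: the Banach-style identity $\mathcal{C}(u, v, t) = \mathcal{C}(u, \mathcal{C}(u, v, \tfrac{1}{2}), 2t)$ is not postulated in the Kohlenbach axioms, so one must either invoke uniqueness of geodesics (afforded by uniform convexity via Lemma~1) or replace the identification by a direct estimate derived from (iv); all other steps are routine consequences of the convexity axioms and the monotone modulus $\eta$.
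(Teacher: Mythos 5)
The paper does not actually prove this lemma: it is imported verbatim, with a citation, from \cite{khan2012implicit}, so there is no in-paper argument to compare yours against. Judged on its own, your proof is essentially correct and reconstructs the standard argument. The one structural difference from the usual proof (the one in the cited source, which rests on the general-parameter form of uniform convexity from \cite{leustean2008nonexpansive}, namely $d(\mathcal{C}(x,y,\lambda),z^*)\le \big(1-2\lambda(1-\lambda)\eta(r,\epsilon)\big)r$) is that you obtain the estimate for general $\lambda\in[a,b]$ by collapsing to the midpoint $m_n=\mathcal{C}(u_n,v_n,\tfrac12)$ and using unique geodesicity to justify $\mathcal{C}(u,v,\lambda)=\mathcal{C}(u,m,2\lambda)$. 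That identification is legitimate: uniform convexity does force unique geodesics, and your distance computations via axiom (ii) place both points at the same position on the segment from $u$ to $m$. But it is the step carrying all the real work, and you are right to flag it; quoting the general-$\lambda$ modulus inequality directly would be shorter. Two points to tighten. First, in the final step the modulus $\eta\big(c+\tau,\epsilon_0/(c+\tau)\big)$ depends on $\tau$ through both arguments, so to let $\tau\to0$ you should freeze the second argument (e.g. use $\epsilon=\epsilon_0/(c+\tau_0)$ for all $\tau\le\tau_0$, admissible because $d(u_{n_k},v_{n_k})\ge\epsilon_0\ge\epsilon\,(c+\tau)$) and then invoke monotonicity in $r$ alone to obtain a bound $\eta_0>0$ independent of $\tau$, after which $c\le(1-\eta_0)(c+\tau)$ for all small $\tau$ gives the contradiction with $c>0$. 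Second, Step 1 is not actually needed: the $\limsup$ hypotheses already suffice in the inequality $c\le(1-2\alpha)c+2\alpha\liminf_{k} d(m_{n_k},z^*)$, since $1-2\alpha\ge0$ after your subsequencing.
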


\vspace{2.4mm}

\begin{definition} 
Assume $\mathcal{S}$ is a hyperbolic space, $\mathcal{A (\neq \emptyset)} \subseteq \mathcal{S} $ then $\{\varkappa_n\} \in \mathcal{A}$ is called a Fejér monotone  w.r.t. $\mathcal{A}$ if $\forall \varkappa \in \mathcal{A}$
\[
d(\varkappa_{n+1},\varkappa) \leq d(\varkappa_n,\varkappa), \quad n \in \mathbb{N}
\]
\end{definition}

\vspace{2.4mm}

\begin{proposition}
    \cite{imdad2016fixed} Assume the sequence $\{\varkappa_n\} \in X$ and $\mathcal{A (\neq \emptyset) \subseteq} \mathcal{S}$ and $\daleth: \mathcal{A}\to \mathcal{A}$ is any nonlinear mapping with $\{\varkappa_n\}$ is Fejér monotone w.r.t. $\mathcal{A}$, then
\begin{itemize}
    \item[(i)] $\{\varkappa_n\}$ is bounded.
    \item[(ii)] $\forall z^* \in \mathcal{F}(\daleth)$ $\{d(\varkappa_n,z^*)\}$ is decreasing and converges.
    \item[(iii)] $\lim\limits_{n \to \infty} d(\varkappa_n,\mathcal{F}(\daleth))$ exists.
\end{itemize}

\end{proposition}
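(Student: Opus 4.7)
The plan is to unwind the definition of Fejér monotonicity and apply elementary facts about monotone real sequences; no deep machinery about the hyperbolic structure or the uniform convexity of $\mathcal{S}$ is needed, because the statement is purely metric.

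For (i), I would pick any $y \in \mathcal{A}$, which is possible since $\mathcal{A}\neq\emptyset$. The Fejér condition $d(\varkappa_{n+1},y)\le d(\varkappa_n,y)$ gives, by an immediate induction on $n$, $d(\varkappa_n,y)\le d(\varkappa_0,y)$ for every $n\in\mathbb{N}$. Hence $\{\varkappa_n\}$ lies in the closed ball centered at $y$ of radius $d(\varkappa_0,y)$, which establishes boundedness.

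For (ii), observe that since $\daleth:\mathcal{A}\to\mathcal{A}$ we have $\mathcal{F}(\daleth)\subseteq\mathcal{A}$, so Fejér monotonicity w.r.t.\ $\mathcal{A}$ specializes to every $z^*\in\mathcal{F}(\daleth)$. Thus $d(\varkappa_{n+1},z^*)\le d(\varkappa_n,z^*)$, showing that the real sequence $\{d(\varkappa_n,z^*)\}$ is monotonically decreasing. Since distances are bounded below by $0$, the monotone convergence theorem on $\mathbb{R}$ yields that $\{d(\varkappa_n,z^*)\}$ converges. For (iii), I would use the definition $d(\varkappa_n,\mathcal{F}(\daleth))=\inf\{d(\varkappa_n,z^*):z^*\in\mathcal{F}(\daleth)\}$. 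The inequality from (ii) holds for each fixed $z^*$, so taking the infimum over $z^*\in\mathcal{F}(\daleth)$ on both sides preserves it, giving $d(\varkappa_{n+1},\mathcal{F}(\daleth))\le d(\varkappa_n,\mathcal{F}(\daleth))$. This real sequence is then decreasing and bounded below by $0$, hence convergent.

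The only subtlety worth calling out — the closest thing to an obstacle in what is otherwise a soft proof — is the implicit transfer of Fejér monotonicity from $\mathcal{A}$ to the subset $\mathcal{F}(\daleth)$ used in (ii) and (iii); this is handled by the inclusion $\mathcal{F}(\daleth)\subseteq\mathcal{A}$, which follows from the standing assumption that $\daleth$ maps $\mathcal{A}$ into itself. For (iii), one should also implicitly understand $\mathcal{F}(\daleth)\neq\emptyset$, an assumption that will be satisfied in the convergence theorems where the proposition is later invoked.
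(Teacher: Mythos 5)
Your proof is correct. The paper does not actually prove this proposition --- it is quoted from the literature (the reference \cite{imdad2016fixed}) without an argument --- but your elementary derivation is the standard one: boundedness from the ball of radius $d(\varkappa_0,y)$, monotone convergence for (ii), and passing the pointwise inequality through the infimum for (iii). Your two side remarks (that $\mathcal{F}(\daleth)\subseteq\mathcal{A}$ is what lets you specialize the Fejér inequality, and that (iii) tacitly needs $\mathcal{F}(\daleth)\neq\emptyset$) are exactly the right points to flag, and the latter hypothesis is indeed supplied wherever the paper invokes the proposition.
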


\vspace{2.4mm}

\begin{theorem}
    \cite{izuchukwu2021approximating} Suppose $\mathcal{S}$ is a hyperbolic space $\mathcal{S}$, $\mathcal{A}  \neq \emptyset $ is a closed convex subset of $\mathcal{S}$  with $b < 1$ and $\mathcal{F}(\daleth) \neq \emptyset$, if a mean nonexpansive mapping  $\daleth: \mathcal{S} \to \mathcal{S}$ is exist then $\mathcal{F}(\daleth)$ is closed convex.
\end{theorem}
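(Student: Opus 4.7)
The plan is to dispatch the two assertions separately: closedness by a routine limit argument, and convexity by combining the mean nonexpansive inequality with the uniqueness of points on a geodesic at prescribed endpoint distances.

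For closedness, I take $\{p_n\} \subseteq \mathcal{F}(\daleth)$ with $p_n \to p$, apply the mean nonexpansive inequality with $(\varkappa_1, \varkappa_2) = (p_n, p)$, and use $\daleth p_n = p_n$ to obtain
\[ d(p_n, \daleth p) \leq a\, d(p_n, p) + b\, d(p_n, \daleth p). \]
Because $b < 1$ this rearranges to $d(p_n, \daleth p) \leq \tfrac{a}{1-b}\, d(p_n, p) \to 0$, so $p_n \to \daleth p$ and uniqueness of limits forces $p = \daleth p \in \mathcal{F}(\daleth)$.

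For convexity, I fix $p, q \in \mathcal{F}(\daleth)$ and $\alpha \in [0,1]$, and set $z := \mathcal{C}(p, q, \alpha)$. Property (i) of $\mathcal{C}$ (together with (iii) for the $v = q$ case) yields $d(p, z) \leq \alpha\, d(p, q)$ and $d(q, z) \leq (1-\alpha)\, d(p, q)$. Plugging $(\varkappa_1, \varkappa_2) = (z, p)$ into the mean nonexpansive inequality and using $\daleth p = p$ gives
\[ d(\daleth z, p) \leq a\, d(z, p) + b\, d(z, \daleth p) = (a+b)\, d(z, p) \leq \alpha\, d(p, q), \]
and symmetrically $d(\daleth z, q) \leq (1-\alpha)\, d(p, q)$. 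Summing the two bounds and comparing with the triangle inequality $d(p, q) \leq d(p, \daleth z) + d(\daleth z, q)$ forces equality everywhere, so $\daleth z$ realizes exactly the same endpoint distances from $p$ and $q$ as $z$ itself does.

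The main obstacle is the last step: deducing $\daleth z = z$ from the information that both points lie at distances $\alpha\, d(p, q)$ and $(1-\alpha)\, d(p, q)$ from $p$ and $q$ respectively. In a bare metric space this need not force equality, but in the hyperbolic setting the map $t \mapsto \mathcal{C}(p, q, t)$ is an isometric embedding of $[0,1]$ by property (ii), and the uniform convexity modulus $\eta$ of Definition 3 provides uniqueness: applying $\eta$ with $x_3 = p$ and then $x_3 = q$ to the midpoint $\mathcal{C}(\daleth z, z, 1/2)$ contradicts the triangle equality whenever $\daleth z \neq z$. Granting this uniqueness, $z = \daleth z \in \mathcal{F}(\daleth)$ and $\mathcal{F}(\daleth)$ is convex.
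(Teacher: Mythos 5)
The paper offers no proof of this statement at all: it is quoted, with citation, from \cite{izuchukwu2021approximating}, so there is no internal argument to compare yours against. Judged on its own terms, your closedness argument is complete and correct; the rearrangement $d(p_n,\daleth p)\leq \tfrac{a}{1-b}\,d(p_n,p)$ is precisely where the hypothesis $b<1$ enters, and uniqueness of limits finishes it.

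The convexity half has a genuine gap, and you have located it exactly: passing from ``$\daleth z$ and $z$ both realize the distances $\alpha\,d(p,q)$ and $(1-\alpha)\,d(p,q)$ to $p$ and $q$'' to ``$\daleth z=z$'' requires strict or uniform convexity, which is \emph{not} among the hypotheses of the theorem as printed — only a hyperbolic space is assumed. This is not an omission you can repair within the stated hypotheses, because the statement is then false: in $(\mathbb{R}^2,\|\cdot\|_\infty)$, a hyperbolic space under the linear convexity mapping, the map $T(x,y)=(x,|x|)$ is nonexpansive (hence mean nonexpansive with $a=1$, $b=0<1$), yet its fixed point set $\{(x,|x|):x\in\mathbb{R}\}$ is closed but not convex. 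Your instinct to reach for the modulus $\eta$ of Definition 3 is therefore the right one: the source theorem is proved for uniformly convex hyperbolic spaces, and under that additional hypothesis your midpoint argument — apply uniform convexity at $\mathcal{C}(\daleth z, z, \tfrac{1}{2})$ with centers $p$ and then $q$, and contradict the triangle equality $d(p,q)=d(p,\daleth z)+d(\daleth z,q)$ — does close the proof, provided you dispose of the degenerate cases $p=q$ and $\alpha\in\{0,1\}$ separately. In short: your proof is correct under the hypotheses of the original result, but the hypotheses transcribed in this paper are too weak to support the convexity claim, and no proof can fix that.
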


\vspace{2.4mm}

\begin{theorem}
    \cite{izuchukwu2021approximating} Assume $\mathcal{S}$ with monotone modulus of convexity $\eta$ is a complete uniformly convex hyperbolic space, $\mathcal{A}  \neq \emptyset $ closed convex subset of $\mathcal{S}$ with $b < 1$ and $\{\varkappa_n \in \mathcal{A}\}$ is a bounded sequence, defined a mean nonexpansive mapping  $\daleth : \mathcal{A} \to \mathcal{A}$ such that $\lim\limits_{n \to \infty} d(\varkappa_n, T \varkappa_n) = 0$ and $\Delta\text{-}\lim\limits_{n \to \infty} \varkappa_n = z^*$ then $z^* \in \mathcal{F}(\daleth)$.
\end{theorem}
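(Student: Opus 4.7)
The plan is to show that $z^{*}$ is a fixed point of $\daleth$ by exploiting the uniqueness of the asymptotic center of the sequence $\{\varkappa_n\}$ guaranteed by the uniformly convex hyperbolic setting. By the definition of $\Delta$-convergence (applied to the full sequence as a subsequence of itself), $z^{*}$ is the unique asymptotic center of $\{\varkappa_n\}$ with respect to $\mathcal{A}$; existence and uniqueness of this asymptotic center comes from Lemma~1, since $\mathcal{A}$ is nonempty closed convex in a complete uniformly convex hyperbolic space with monotone modulus of uniform convexity and $\{\varkappa_n\}$ is bounded. In particular $z^{*}\in\mathcal{A}$, so $\daleth z^{*}$ is well-defined and lies in $\mathcal{A}$.

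The next step is to estimate $d(\varkappa_n,\daleth z^{*})$. I would use the triangle inequality and then the mean nonexpansiveness of $\daleth$ evaluated at the pair $(\varkappa_n,z^{*})$ to obtain
\[
d(\varkappa_n,\daleth z^{*}) \leq d(\varkappa_n,\daleth\varkappa_n) + d(\daleth\varkappa_n,\daleth z^{*}) \leq d(\varkappa_n,\daleth\varkappa_n) + a\,d(\varkappa_n,z^{*}) + b\,d(\varkappa_n,\daleth z^{*}).
\]
Because of the hypothesis $b<1$, I can transpose the last term and divide to get
\[
d(\varkappa_n,\daleth z^{*}) \leq \frac{1}{1-b}\,d(\varkappa_n,\daleth\varkappa_n) + \frac{a}{1-b}\,d(\varkappa_n,z^{*}).
\]

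Now I would take $\limsup_{n\to\infty}$ of both sides. Since $d(\varkappa_n,\daleth\varkappa_n)\to 0$ by hypothesis, the first summand vanishes, leaving
\[
r(\daleth z^{*},\{\varkappa_n\}) \leq \frac{a}{1-b}\,r(z^{*},\{\varkappa_n\}).
\]
The defining inequality $a+b\leq 1$ of mean nonexpansive mappings forces $a\leq 1-b$, hence $\frac{a}{1-b}\leq 1$, and therefore $r(\daleth z^{*},\{\varkappa_n\})\leq r(z^{*},\{\varkappa_n\})$. Since $z^{*}$ is the unique asymptotic center of $\{\varkappa_n\}$ in $\mathcal{A}$ and $\daleth z^{*}\in\mathcal{A}$, uniqueness yields $\daleth z^{*}=z^{*}$, as required.

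I do not anticipate a serious obstacle: the argument is essentially a demiclosedness-type argument, and the one place that requires care is ensuring that the mean nonexpansive inequality is applied in the correct order of arguments so that the $b\,d(\varkappa_n,\daleth z^{*})$ term appears on the right (rather than $b\,d(z^{*},\daleth\varkappa_n)$, which would not allow the algebraic rearrangement). The condition $b<1$ assumed in the hypothesis is precisely what makes the rearrangement legitimate, and $a+b\leq 1$ is what delivers the contraction factor $\tfrac{a}{1-b}\leq 1$ needed for the uniqueness comparison of asymptotic radii.
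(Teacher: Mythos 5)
The paper states this theorem without proof, merely citing Izuchukwu et al.\ \cite{izuchukwu2021approximating}; your argument is precisely the standard demiclosedness proof used for such results, and it is correct: the triangle inequality combined with the mean nonexpansive inequality applied to the ordered pair $(\varkappa_n, z^{*})$, the rearrangement legitimized by $b<1$, the bound $\tfrac{a}{1-b}\leq 1$ coming from $a+b\leq 1$, and the uniqueness of the asymptotic center all go through. The only point worth making explicit is that the paper's definition of $\Delta$-convergence refers to asymptotic centers with respect to $\mathcal{S}$, while your uniqueness appeal is to the center with respect to $\mathcal{A}$ via Lemma~1; since $\{\varkappa_n\}\subseteq\mathcal{A}$ and $\mathcal{A}$ is closed and convex, this identification is the standard convention and causes no difficulty.
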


\vspace{2.4mm}

\begin{definition}
    A mapping $\daleth : \mathcal{A} \to \mathcal{A}$ is said to be strictly pseudocontractive if there is a number \( k \), \( 0 \leq k < 1 \), such that if each of \( x \) and \( y \) in \( \mathcal{A} \), then
   \[ \|\daleth\varkappa_1 - \daleth\varkappa_2\|^2 \leq \|\varkappa_1 - \varkappa_2\|^2 + k \| (I - T)\varkappa_1 - (I - T)\varkappa_2 \|^2 \]
\end{definition}

\noindent In 1972, Gordon \cite{johnson1972fixed} presented \textit{the mean value iterative process}    to get to the fixed point through \textit{strictly pseudocontractive mappings} in Hilbert spaces, for any positive integer \( n \) is defined by 
\begin{equation}
    \varkappa_{n+1} = \mathcal{C}(\varkappa_n, \daleth\varkappa_n, n) =[1/(n + 1 )][(n\varkappa_n + \daleth\varkappa_n)]
\end{equation} 

\noindent We present an iterative process that extended to \textit{the hyperbolic space} with a sequence \( \{r_n\} \) for \textit{mean nonexpansive mappings}. In order to preserve convergence properties in this non-Euclidean space, the iterative process must be reinterpreted to account for the unique properties of hyperbolic distances and transformations.

\section{Main Results}\label{sec3}
An iterative process for \(x_0\) in \(\mathcal{A}\), the sequence \( \{\varkappa_n\}_n^\infty \)  in \(\mathcal{A}\) is given by
 
\begin{equation}
 \varkappa_{n+1} = \mathcal{C}(\varkappa_n, \daleth\varkappa_n, \alpha_n, r_n) \\
  =(1-\alpha_n) \varkappa_n+\alpha_n \daleth\left(\dfrac{r_n \varkappa_n}{r_n + 1}+\dfrac{T \varkappa_n}{r_n + 1}\right)
\end{equation}

\text{where} 
\( \alpha_n \in [0,1], r_n \in [0,\infty)\) and equation (2) reduces to 

\vspace{2.4mm}

\begin{itemize}
  \item The two step Picard iteration \(\varkappa_{n+1} = \mathcal{C}(\varkappa_n, \daleth\varkappa_n)\), if \( r_n \to 0 \) and \(\alpha_n=1\).

   \item The Mann iteration \(\varkappa_{n+1} = \mathcal{C}(\varkappa_n, \daleth\varkappa_n,\alpha_n)\), if \( r_n (= 2^n) \to \infty\), as \( n \to \infty \)
    \[
    \varkappa_{n+1} = \mathcal{C}(\varkappa_n, \daleth\varkappa_n, 2^n) = \dfrac{2^n \varkappa_n}{2^n + 1} + \dfrac{T \varkappa_n}{2^n + 1}\approx \dfrac{2^n \varkappa_n}{2^n} = \varkappa_n.
    \]

    \item The  Ishikawa iteration \(\varkappa_{n+1} = \mathcal{C}(\varkappa_n, \daleth\varkappa_n,\alpha_n,\beta_n)\), if \( (r_n + 1)^{-1} =\beta_n \).
\end{itemize}

\vspace{2.4mm}

\begin{lemma}
     Assume $\mathcal{A}  \neq \emptyset $ is a closed convex subset of a hyperbolic space $\mathcal{S}$ and $\daleth : \mathcal{A} \to \mathcal{A}$ is a mean nonexpansive mapping with $\mathcal{F}(\daleth)\neq \emptyset$ and the sequence $\{\varkappa_n\}$ is defined by (2) then $\lim\limits_{n \to \infty} d(\varkappa_n, z^*)$ exists $ \forall z^* \in \mathcal{F}(\daleth)$.
\end{lemma}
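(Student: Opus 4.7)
The plan is to show that for every fixed point $z^* \in \mathcal{F}(\daleth)$, the real sequence $\{d(\varkappa_n, z^*)\}$ is monotonically nonincreasing; since it is bounded below by $0$, convergence follows immediately. The monotonicity will be extracted by peeling the iteration (2) from the outside in, applying the hyperbolic convexity axioms (i) and (iv) together with a quasi-nonexpansive consequence of the mean nonexpansive condition.

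The first step is to observe that whenever $z^* = \daleth z^*$, the mean nonexpansive inequality collapses to a quasi-nonexpansive one. Indeed, setting $\varkappa_2 = z^*$ in the defining inequality gives
\[
d(\daleth\varkappa_1, z^*) \;\leq\; a\,d(\varkappa_1, z^*) + b\,d(\varkappa_1, \daleth z^*) \;=\; (a+b)\,d(\varkappa_1, z^*) \;\leq\; d(\varkappa_1, z^*),
\]
since $a+b \leq 1$. This is the only place the hypothesis that $z^*$ is a fixed point is used.

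Next I would denote the inner convex combination by $y_n := \mathcal{C}\!\left(\varkappa_n, \daleth\varkappa_n, \tfrac{1}{r_n+1}\right)$ so that $\varkappa_{n+1} = \mathcal{C}(\varkappa_n, \daleth y_n, \alpha_n)$. Applying hyperbolic axiom (i) and the quasi-nonexpansive bound above,
\[
d(y_n, z^*) \;\leq\; \Bigl(1-\tfrac{1}{r_n+1}\Bigr)\,d(\varkappa_n, z^*) + \tfrac{1}{r_n+1}\,d(\daleth\varkappa_n, z^*) \;\leq\; d(\varkappa_n, z^*).
\]
A second application of quasi-nonexpansiveness yields $d(\daleth y_n, z^*) \leq d(y_n, z^*) \leq d(\varkappa_n, z^*)$. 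Finally, using axiom (i) once more on the outer convex combination,
\[
d(\varkappa_{n+1}, z^*) \;\leq\; (1-\alpha_n)\,d(\varkappa_n, z^*) + \alpha_n\, d(\daleth y_n, z^*) \;\leq\; d(\varkappa_n, z^*).
\]
Thus $\{d(\varkappa_n, z^*)\}$ is a nonincreasing, nonnegative real sequence, hence $\lim_{n\to\infty} d(\varkappa_n, z^*)$ exists.

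There is no genuine obstacle here; the only point requiring care is the bookkeeping of the nested convex structure $\mathcal{C}(\cdot,\cdot,\cdot)$ in the hyperbolic setting, making sure that each time axiom (i) is invoked the weights match the inner and outer combinations appearing in the definition of (2). As a byproduct, $\{\varkappa_n\}$ is Fejér monotone with respect to $\mathcal{F}(\daleth)$, which will be useful in the subsequent strong and $\Delta$-convergence theorems.
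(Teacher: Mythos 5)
Your proposal is correct and follows essentially the same route as the paper: both reduce the mean nonexpansive condition at a fixed point to the quasi-nonexpansive bound $d(\daleth\varkappa, z^*) \leq (a+b)\,d(\varkappa, z^*) \leq d(\varkappa, z^*)$, apply the hyperbolic convexity axiom to the inner and outer combinations in (2), and conclude that $\{d(\varkappa_n, z^*)\}$ is nonincreasing and bounded below, hence convergent. Your version is slightly cleaner in isolating the quasi-nonexpansiveness step and in noting the Fejér monotonicity explicitly, but the argument is the same.
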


\begin{proof}
     If $z^* \in \mathcal{F}(\daleth)$ and from (2) we get
\begin{align*}
d(\varkappa_{n+1}, z^*) &= d(\mathcal{C}(\varkappa_n, \daleth\varkappa_n,\alpha_n, r_n), z^*) \\
&\leq (1-\alpha_n)d(\varkappa_n,z^*) + \alpha_nd\left(\daleth \left(\dfrac{r_n x_n}{r_n+1} + \dfrac{\daleth x_n}{r_n+1}\right),z^*\right)\\
&\leq (1-\alpha_n)d(\varkappa_n,z^*) + \alpha_n(a+b)d \left(\dfrac{r_n x_n}{r_n+1} + \dfrac{\daleth x_n}{r_n+1},z^*\right) \\
&\leq (1-\alpha_n)d(\varkappa_n,z^*) + \alpha_n\left(\dfrac{r_n}{r_n+1}d(\varkappa_n,z^*)+ \dfrac{1}{r_n+1}d(\daleth\varkappa_n,z^*)\right)\\
&\leq (1-\alpha_n)d(\varkappa_n,z^*) + \alpha_n\left(\dfrac{r_n}{r_n+1}d(\varkappa_n,z^*)+ \dfrac{(a+b)}{r_n+1}d(\varkappa_n,z^*)\right) \\
&\leq (1-\alpha_n)d(\varkappa_n,z^*) + \alpha_n d(\varkappa_n,z^*) \\
&\leq d(\varkappa_n,z^*) 
\end{align*}
$ \implies d(\varkappa_n, z^*)$ is non-increasing and bounded. Hence $\lim\limits_{n \to \infty} d(\varkappa_n, z^*)$ exists.
\end{proof}

\vspace{2.4mm}

\begin{lemma}
Suppose \(\mathcal{S}\) is a complete uniformly convex hyperbolic space with monotone modulus of uniform convexity $\eta$ and defined a mean nonexpansive mapping $\daleth : \mathcal{A} \to \mathcal{A}$, $\mathcal{A}  \neq \emptyset $ is a closed convex subset of \(\mathcal{S}\) with $\mathcal{F}(\daleth) \neq \emptyset$, then the sequence $\{\varkappa_n\}$ is defined by (2) is bounded and $\lim\limits_{n \to \infty} d(\varkappa_n, \daleth\varkappa_n) = 0$.
\end{lemma}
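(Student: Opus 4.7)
Boundedness follows at once from the previous lemma: for any $z^* \in \mathcal{F}(\daleth)$, $c := \lim_{n\to\infty} d(\varkappa_n, z^*)$ exists, so $\{\varkappa_n\}$ sits in a fixed ball about $z^*$. In the degenerate case $c = 0$ one has $\varkappa_n \to z^*$; since mean nonexpansiveness (with second argument $z^*$) gives $d(\daleth \varkappa_n, z^*) \leq (a+b)\, d(\varkappa_n, z^*) \to 0$, the triangle inequality settles $d(\varkappa_n, \daleth \varkappa_n) \to 0$ immediately. I therefore assume $c > 0$ from now on.

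The core of the argument is to recast iteration (2) as a two-point convex combination so that the lemma from \cite{khan2012implicit} becomes applicable. Setting $y_n := \mathcal{C}(\varkappa_n, \daleth \varkappa_n, 1/(r_n+1))$, iteration (2) becomes $\varkappa_{n+1} = \mathcal{C}(\varkappa_n, \daleth y_n, \alpha_n)$. A short computation using property (i) of the convex structure together with the mean nonexpansive inequality yields $d(y_n, z^*) \leq d(\varkappa_n, z^*)$ and hence $d(\daleth y_n, z^*) \leq (a+b)\, d(y_n, z^*) \leq d(\varkappa_n, z^*)$. Taking $\limsup$ I obtain the three preconditions $\limsup_n d(\varkappa_n, z^*) \leq c$, $\limsup_n d(\daleth y_n, z^*) \leq c$, and $\lim_n d(\mathcal{C}(\varkappa_n, \daleth y_n, \alpha_n), z^*) = c$. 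Under the standing assumption $\alpha_n \in [\mu, \nu] \subset (0,1)$, the cited lemma then yields $\lim_{n\to\infty} d(\varkappa_n, \daleth y_n) = 0$.

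The main obstacle is descending from $d(\varkappa_n, \daleth y_n)$ to $d(\varkappa_n, \daleth \varkappa_n)$. Starting from $d(\varkappa_n, \daleth \varkappa_n) \leq d(\varkappa_n, \daleth y_n) + d(\daleth y_n, \daleth \varkappa_n)$ and expanding the second term by mean nonexpansiveness, together with the estimates $d(\varkappa_n, y_n) \leq (r_n+1)^{-1} d(\varkappa_n, \daleth \varkappa_n)$ and $d(y_n, \daleth \varkappa_n) \leq r_n (r_n+1)^{-1} d(\varkappa_n, \daleth \varkappa_n)$ from property (i), rearrangement produces the coefficient $(r_n(1-b)+1-a)/(r_n+1)$ multiplying $d(\varkappa_n, \daleth \varkappa_n)$ on the left. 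The essential technical point is that this coefficient is bounded below away from $0$ uniformly in $n$: viewed as a function of $r \in [0,\infty)$ it is monotone with endpoint values $1-a$ and $1-b$, so its infimum is $1 - \max(a,b)$, which is strictly positive because $b < 1$ combined with $a+b \leq 1$ forces $\max(a,b) < 1$. The conclusion $\lim_n d(\varkappa_n, \daleth \varkappa_n) = 0$ then follows by dividing through.
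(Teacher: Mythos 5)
Your proposal is correct in outline, and its first half coincides with the paper's own argument: boundedness from Lemma 4, disposal of the case $c=0$, the estimates $\limsup_n d(\daleth y_n,z^*)\leq c$ for $y_n=\mathcal{C}(\varkappa_n,\daleth\varkappa_n,1/(r_n+1))$, and one application of Lemma 3 to the outer convex combination $\varkappa_{n+1}=\mathcal{C}(\varkappa_n,\daleth y_n,\alpha_n)$ to obtain $\lim_n d(\varkappa_n,\daleth y_n)=0$. Where you genuinely diverge is in descending from $d(\varkappa_n,\daleth y_n)$ to $d(\varkappa_n,\daleth\varkappa_n)$: the paper instead proves $\lim_n d(y_n,z^*)=c$ and invokes Lemma 3 a \emph{second} time, now on the inner combination $y_n=\mathcal{C}(\varkappa_n,\daleth\varkappa_n,1/(r_n+1))$, which tacitly requires $1/(r_n+1)$ to lie in a compact subinterval of $(0,1)$, i.e.\ $r_n$ bounded above and away from $0$ --- a hypothesis never stated and incompatible with the specializations $r_n\to 0$ and $r_n\to\infty$ advertised after (2). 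Your algebraic rearrangement, which isolates the coefficient $\bigl(r_n(1-b)+1-a\bigr)/(r_n+1)$, avoids that second application entirely and in principle covers all $r_n\in[0,\infty)$; that is a real improvement over the paper's route.

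There is, however, one incorrect inference in your closing step: $b<1$ together with $a+b\leq 1$ does \emph{not} force $\max(a,b)<1$. The pair $a=1$, $b=0$ (an ordinary nonexpansive map) satisfies both and gives $1-\max(a,b)=0$, so your uniform lower bound degenerates precisely when $r_n\to 0$, where $\daleth y_n=\daleth^2\varkappa_n$ and $d(\varkappa_n,\daleth^2\varkappa_n)\to 0$ does not by itself yield $d(\varkappa_n,\daleth\varkappa_n)\to 0$. To close this you must additionally assume $a<1$ (or else impose $\inf_n r_n>0$). You should also promote your ``standing assumption'' $\alpha_n\in[\mu,\nu]\subset(0,1)$ to an explicit hypothesis: it is genuinely needed for Lemma 3 but does not appear in the lemma as stated.
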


\begin{proof}
From Lemma 4, we know that $\lim\limits_{n \to \infty} d(\varkappa_n, z^*)$ exists for each $z^* \in \mathcal{F}(\daleth)$.
\end{proof}

\[  \text{Let} \lim\limits_{n \to \infty} d(\varkappa_n, z^*) = c \implies \limsup_{n \to \infty} d(\varkappa_n, z^*) \leq c\]

\noindent \textbf{Case I:} If $c = 0$, we're done.

\noindent \textbf{Case II:} If $c > 0$, 
\begin{align*}
d(\daleth\varkappa_n, z^*) &\leq a \, d(\varkappa_n, z^*) + b \, d(\varkappa_n, z^*) \\
             &= (a + b) \, d(\varkappa_n, z^*) \\
             &\leq d(\varkappa_n, z^*).
\end{align*}

\[\implies
\limsup_{n \to \infty} d(\daleth\varkappa_n, z^*) \leq c
\]
Let \(\omega_n = \dfrac{r_n x_n}{r_n+1} + \dfrac{\daleth x_n}{r_n+1}\)

\begin{align*}
d(\daleth\omega_n, z^*) &\leq a \, d(\omega_n, z^*) + b \, d(\omega_n, z^*) \\
             &= (a + b) \, d(\omega_n, z^*) \\
             &\leq d(\omega_n, z^*) \\
             &= d\left(\dfrac{r_n x_n}{r_n+1} + \dfrac{\daleth x_n}{r_n+1},z^*\right) \\ &\leq\dfrac{r_n}{r_n+1}d(\varkappa_n,z^*)+ \dfrac{1}{r_n+1}d(\daleth\varkappa_n,z^*) \\
             &\leq d(\varkappa, z^*)
\end{align*}

\[ \implies 
\limsup_{n \to \infty} d(\daleth\omega_n, z^*) \leq c \quad \text{and \quad} \limsup_{n \to \infty} d(\omega_n, z^*) \leq c
\]

Now,

\begin{align*}
c &=  \lim\limits_{n \to \infty} d(\varkappa_{n+1}, z^*)\\
&\leq  \lim\limits_{n \to \infty} \left((1 - \alpha_n) d(\varkappa_n, z^*) + \alpha_n d(\daleth \omega_n, z^*)\right)
\end{align*}
From Lemma 3, we have
\[
\lim\limits_{n \to \infty} d(\varkappa_n, \daleth \omega_n) = 0.
\]

\begin{align*}
d(\varkappa_{n+1}, z^*)
&\leq (1 - \alpha_n) d(\varkappa_n, z^*) + \alpha_n d(\daleth \omega_n, z^*) \\
& \leq (1 - \alpha_n) [d(\varkappa_n, \daleth\omega_n) + d(\daleth\omega_n, z^*)] + \alpha_n d(\daleth \omega_n, z^*) \\
& \leq d(\daleth \omega_n, z^*)
\end{align*}

\[\implies c \leq \liminf_{n \to \infty} d(\daleth\omega_n, z^*)\]

we get

\[\lim_{n \to \infty} d(\daleth\omega_n, z^*) = c\]

and

\begin{align*}
d(\daleth\omega_n, z^*) &\leq a \, d(\omega_n, z^*) + b \, d(\omega_n, z^*) \\
             &= (a + b) \, d(\omega_n, z^*) \\
             &\leq d(\omega_n, z^*).
\end{align*}

\[\implies c \leq \liminf_{n \to \infty} d(\omega_n, z^*)\]

we get

\[\lim_{n \to \infty} d(\omega_n, z^*) = c\]

Now 

\begin{align*}
    c &= \lim_{n \to \infty} d(\omega_n, z^*) \\ 
    &= \lim_{n \to \infty} d\left(\dfrac{r_n x_n}{r_n+1} + \dfrac{\daleth x_n}{r_n+1}, z^*\right) \\ 
    &\leq \lim_{n \to \infty} \left(\dfrac{r_n}{r_n+1}d(x_n,z^*) + \dfrac{1}{r_n+1}d(\daleth x_n,z^*) \right)
\end{align*}

From Lemma 3, we have

\[
\lim\limits_{n \to \infty} d(\varkappa_n, \daleth \varkappa_n) = 0.
\]

\vspace{2.4mm}

\begin{theorem}
Suppose \(\mathcal{S}\) is a complete uniformly convex hyperbolic space with monotone modulus of uniform convexity $\eta$ and defined a mean nonexpansive mapping $\daleth : \mathcal{A} \to \mathcal{A}$, $\mathcal{A}  \neq \emptyset $ is a closed convex subset of \(\mathcal{S}\) for \( b < 1 \) with \( \mathcal{F}(\daleth) \neq \emptyset \) then \( \{\varkappa_n\} \) defined by (2) is \(\Delta\)-convergence to a fixed point of \( \daleth \).
\end{theorem}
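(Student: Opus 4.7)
The plan is to identify the unique asymptotic center of $\{\varkappa_n\}$ as a fixed point, and then to show that every subsequence has that same asymptotic center, which is exactly the definition of $\Delta$-convergence.

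First I would invoke Lemma 5 to conclude that $\{\varkappa_n\}$ is bounded and that $\lim_{n\to\infty} d(\varkappa_n,\daleth\varkappa_n)=0$. Boundedness together with Lemma 1 then furnishes a unique asymptotic center $\mathbf{C}(\{\varkappa_n\})=\{x^*\}$ in $\mathcal{A}$, and in fact every subsequence of $\{\varkappa_n\}$ has its own unique asymptotic center in $\mathcal{A}$.

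Next, take an arbitrary subsequence $\{\varkappa_{n_k}\}$ with unique asymptotic center $\{u\}\subset\mathcal{A}$. The demiclosedness principle of Theorem 2, applied to this bounded subsequence for which $\lim_{k\to\infty} d(\varkappa_{n_k},\daleth\varkappa_{n_k})=0$, yields $u\in\mathcal{F}(\daleth)$. With $u$ a fixed point, Lemma 4 gives that $\lim_{n\to\infty} d(\varkappa_n,u)$ exists, and Lemma 2 (with $\varkappa=x^*$ and $\varkappa_1=u$) forces $x^*=u$. Since the subsequence was arbitrary, every subsequence of $\{\varkappa_n\}$ has $x^*$ as its unique asymptotic center, which is precisely $\Delta\text{-}\lim_{n\to\infty}\varkappa_n=x^*$, and this limit lies in $\mathcal{F}(\daleth)$.

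The main obstacle I foresee is the demiclosedness step, because Theorem 2 as stated asks for $\Delta$-convergence of the whole sequence to $z^*$, whereas at this point I only have a subsequence with a unique asymptotic center. The cleanest fix is to rederive demiclosedness by hand for the subsequence via the mean-nonexpansive inequality
\[
d(\varkappa_{n_k},\daleth u)\leq d(\varkappa_{n_k},\daleth\varkappa_{n_k})+d(\daleth\varkappa_{n_k},\daleth u)\leq d(\varkappa_{n_k},\daleth\varkappa_{n_k})+a\,d(\varkappa_{n_k},u)+b\,d(\varkappa_{n_k},\daleth u),
\]
so that $(1-b)\,d(\varkappa_{n_k},\daleth u)\leq d(\varkappa_{n_k},\daleth\varkappa_{n_k})+a\,d(\varkappa_{n_k},u)$. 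Taking $\limsup$ in $k$ and using $b<1$ together with $a+b\leq 1$ (hence $\tfrac{a}{1-b}\leq 1$) gives $r(\daleth u,\{\varkappa_{n_k}\})\leq r(u,\{\varkappa_{n_k}\})$; the uniqueness of the asymptotic center of $\{\varkappa_{n_k}\}$ in $\mathcal{A}$ then forces $\daleth u=u$, and the rest of the argument proceeds as above.
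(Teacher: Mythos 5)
Your proposal is correct, and it follows the same broad strategy as the paper (boundedness and $\lim_n d(\varkappa_n,\daleth\varkappa_n)=0$ from Lemma 5, demiclosedness to land in $\mathcal{F}(\daleth)$, then uniqueness of asymptotic centers), but the final identification step is organized differently. The paper takes two $\Delta$-convergent subsequences with $\Delta$-limits $\omega_1$ and $\omega_2$, applies the demiclosedness theorem to each (its hypothesis is then literally satisfied, since those subsequences are assumed to $\Delta$-converge), and derives a contradiction from a chain of $\limsup$ inequalities that tacitly uses Lemma 4 to pass between $\limsup$ along a subsequence and along the whole sequence. You instead take an \emph{arbitrary} subsequence, show its unique asymptotic center $u$ is a fixed point, and invoke Lemma 2 (with the convergence of $d(\varkappa_n,u)$ from Lemma 4) to force $u=x^*$. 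Your version matches the definition of $\Delta$-convergence more directly: the paper's argument only shows that any two subsequential $\Delta$-limits coincide, whereas the definition requires every subsequence to have the same asymptotic center, which is exactly what you prove. The price is that your arbitrary subsequence need not $\Delta$-converge, so Theorem 2 does not apply verbatim; you correctly identify this and your hand-derived demiclosedness estimate
\[
(1-b)\,d(\varkappa_{n_k},\daleth u)\leq d(\varkappa_{n_k},\daleth\varkappa_{n_k})+a\,d(\varkappa_{n_k},u),
\]
combined with $a/(1-b)\leq 1$ and uniqueness of the asymptotic center, closes that gap cleanly. Both routes are sound; yours is slightly more self-contained and logically tighter at the definitional level.
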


\begin{proof}
Since $\{\varkappa_n\}$ is a bounded then $\{\varkappa_n\}$ has $\Delta$-convergent subsequences. Suppose $\omega_1$ and $\omega_2$ are the $\Delta$-limits of subsequences $\{\omega_{n_j\}}$ and $\{\omega_{n_k}\}$ of $\{\varkappa_n\}$ respectively from Lemma (1 \& 5)

\[  \mathbf{C}(\mathcal{A},\{\omega_{n_j}\}) = \{\omega_1\}, \quad  \mathbf{C}(\mathcal{A},\{\omega_{n_k}\}) = \{\omega_2\} \quad \& \lim\limits_{n \to \infty} d(\omega_{n_j}, \daleth\omega_{n_j}) = 0, \quad \lim\limits_{n \to \infty} d(\omega_{n_k}, \daleth\omega_{n_k}) = 0.
\]
using theorem 3, $\omega_1, \omega_2 \in \mathcal{F}(\daleth)$.

\noindent Now, suppose $\omega_1 \neq \omega_2$, since asymptotic center is uniqueness.
\begin{align*}
\limsup_{n \to \infty} d(\varkappa_n, \omega_1) 
&= \limsup_{n \to \infty} d(\omega_{n_j}, \omega_1) \\
&\leq \limsup_{n \to \infty} d(\omega_{n_j} \omega_2) \\
&= \limsup_{n \to \infty} d(\varkappa_n, \omega_2) \\
&= \limsup_{n \to \infty} d(\omega_{n_k}, \omega_2) \\
&= \limsup_{n \to \infty} d(\omega_{n_k}, \omega_1) \\
&= \limsup_{n \to \infty} d(\varkappa_n, \omega_1).
\end{align*}
which is a contradiction, thus $\omega_1 = \omega_2$ and  $\{\varkappa_n\}$ $\Delta$-converges to a fixed point of $\daleth$.
\end{proof}

\begin{theorem}
Suppose \(\mathcal{S}\) is a complete uniformly convex hyperbolic space with monotone modulus of uniform convexity $\eta$ and defined a mean nonexpansive mapping $\daleth : \mathcal{A} \to \mathcal{A}$, $\mathcal{A}  \neq \emptyset $ is a closed convex subset of \(\mathcal{S}\) for \( b < 1 \) with \( \mathcal{F}(\daleth) \neq \emptyset \) then \( \{\varkappa_n\} \) defined by (2) is converges strongly to a fixed point of $\daleth$ if and only if \(
\liminf_{n \to \infty} d(\varkappa_n, \mathcal{F}(\daleth)) = 0,
\)
where $d(\varkappa_n, \mathcal{F}(\daleth)) = \inf \{ d(\varkappa_n, z^*) : z^* \in \mathcal{F}(\daleth) \}$.
\end{theorem}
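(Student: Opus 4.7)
The plan is to prove the two implications separately, noting that the forward direction is essentially trivial while the reverse direction is the content of the theorem. For necessity, if $\varkappa_n \to z^{*}$ strongly with $z^{*} \in \mathcal{F}(\daleth)$, then $d(\varkappa_n, \mathcal{F}(\daleth)) \leq d(\varkappa_n, z^{*}) \to 0$, so a fortiori the $\liminf$ is $0$. I would dispose of this in a single sentence.

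For the sufficiency direction, I assume $\liminf_{n \to \infty} d(\varkappa_n, \mathcal{F}(\daleth)) = 0$ and proceed in three steps. First, I upgrade the $\liminf$ to a genuine limit: Lemma 4 shows that $\{d(\varkappa_n, z^{*})\}$ is non-increasing for each $z^{*} \in \mathcal{F}(\daleth)$, so $\{\varkappa_n\}$ is Fej\'er monotone with respect to $\mathcal{F}(\daleth)$, and Proposition 1(iii) then guarantees that $\lim_{n \to \infty} d(\varkappa_n, \mathcal{F}(\daleth))$ exists; combined with the hypothesis this limit equals $0$.

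Second, I show that $\{\varkappa_n\}$ is Cauchy. Given $\varepsilon > 0$, pick $N$ so large that $d(\varkappa_N, \mathcal{F}(\daleth)) < \varepsilon/4$ and choose $z^{*} \in \mathcal{F}(\daleth)$ with $d(\varkappa_N, z^{*}) < \varepsilon/2$. For any $n, m \geq N$, the monotone decrease of $\{d(\varkappa_k, z^{*})\}$ from Lemma 4 gives both $d(\varkappa_n, z^{*}) < \varepsilon/2$ and $d(\varkappa_m, z^{*}) < \varepsilon/2$, whence the triangle inequality yields $d(\varkappa_n, \varkappa_m) < \varepsilon$. By completeness of $\mathcal{S}$ there exists $p \in \mathcal{S}$ with $\varkappa_n \to p$.

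Third, I identify $p$ as a fixed point. Theorem 2, invoked under the hypothesis $b < 1$, asserts that $\mathcal{F}(\daleth)$ is closed. Since $d(\varkappa_n, \mathcal{F}(\daleth)) \to 0$ and $\varkappa_n \to p$, continuity of the distance-to-set functional forces $d(p, \mathcal{F}(\daleth)) = 0$, and closedness then gives $p \in \mathcal{F}(\daleth)$, completing the proof. I do not anticipate any serious obstacle: the argument is a standard Fej\'er-monotone Cauchy extraction, and the only point that requires care is being explicit about the fact that the monotonicity used in the Cauchy step is with respect to a fixed $z^{*}$ chosen after $N$, rather than with respect to the set itself.
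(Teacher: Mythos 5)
Your proposal is correct and follows essentially the same route as the paper: the trivial forward implication, upgrading the $\liminf$ to a limit via Fej\'er monotonicity (Lemma 4 and Proposition 1), the Cauchy extraction using a fixed point $z^{*}$ chosen near some $\varkappa_{N}$ together with the monotone decrease of $d(\varkappa_{n},z^{*})$, and finally closedness of $\mathcal{F}(\daleth)$ from Theorem 2 to identify the limit as a fixed point. Your version is in fact slightly more careful than the paper's in making explicit where Proposition 1(iii) and the choice of $z^{*}$ after $N$ enter.
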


\begin{proof}
Consider $\{\varkappa_n\}$ converges strongly to $z^* \in \mathcal{F}(\daleth)$, then \(
\lim\limits_{n \to \infty} d(\varkappa_n, z^*) = 0.
\) and $0 \leq d(\varkappa_n, \mathcal{F}(\daleth)) \leq d(\varkappa_n, z^*)$ which implies \( \lim\limits_{n \to \infty} d(\varkappa_n, \mathcal{F}(\daleth)) = 0.\) Thus $\liminf_{n \to \infty} d(\varkappa_n, \mathcal{F}(\daleth)) = 0$.

\noindent \textbf{(Conversely)} Let $\liminf_{n \to \infty} d(\varkappa_n, \mathcal{F}(\daleth)) = 0$ then $\lim\limits_{n \to \infty} d(\varkappa_n, \mathcal{F}(\daleth))$ exists (from lemma 4). Thus \[
\lim\limits_{n \to \infty} d(\varkappa_n, \mathcal{F}(\daleth)) = 0.
\] Now we need to show that $\{\varkappa_n\}$ is a Cauchy sequence in $\mathcal{A}$. Since $\lim\limits_{n \to \infty} d(\varkappa_n, \mathcal{F}(\daleth)) = 0$ for each $\epsilon > 0$ there is $n_0 \in \mathbb{N}$ for all \(n \geq n_0\) such that
\[
d(\varkappa_n, \mathcal{F}(\daleth)) < \frac{\epsilon}{2}
\]
Particularly, \[\inf \{ d(\varkappa_{n_0}, z^*) : z^* \in \mathcal{F}(\daleth) \} < \dfrac{\epsilon}{2}.\]
Thus $\exists \quad z^*_1 \in \mathcal{F}(\daleth)$ then
\[d(\varkappa_{n_0}, z^*_1) < \dfrac{\epsilon}{2}.\]

\noindent For any $m,z^* n \geq n_0$, we get
\begin{align*}
d(\varkappa_{n+m}, \varkappa_n) &\leq d(x_{n+m}, z^*_1) + d(z^*_1, \varkappa_n) \\
&\leq d(\varkappa_{n_0}, z^*_1) + d(z^*_1, \varkappa_{n_0}) \\
&\leq \frac{\epsilon}{2} + \frac{\epsilon}{2} \\
&= \epsilon.
\end{align*}
$\implies \{\varkappa_n\} \in \mathcal{A}$ is a Cauchy sequence, since $\mathcal{A} \in \mathcal{S}$ then $\mathcal{A}$ is complete. Thus, $\{\varkappa_n\}$ converge to a point in $\mathcal{A}$, i.e., \(
\lim\limits_{n \to \infty} \varkappa_n = z^*.\)
using theorem 2, $\mathcal{F}(\daleth)$ is closed and $\lim\limits_{n \to \infty} d(\varkappa_n, \mathcal{F}(\daleth)) = 0$ implies \(
\lim\limits_{n \to \infty} d(z^*, \mathcal{F}(\daleth)) = 0,
\) i.e., $z^* \in \mathcal{F}(\daleth)$.
\end{proof}

\subsection*{Declarations}

\noindent All authors contributed equally (Mohd Tariq and Mayank Sharma).

\noindent The authors declare that they have no conflict of interest.

%%=============================================%%
%% For submissions to Nature Portfolio Journals %%
%% please use the heading ``Extended Data''.   %%
%%=============================================%%

%%=============================================================%%
%% Sample for another appendix section			       %%
%%=============================================================%%

%% \section{Example of another appendix section}\label{secA2}%
%% Appendices may be used for helpful, supporting or essential material that would otherwise 
%% clutter, break up or be distracting to the text. Appendices can consist of sections, figures, 
%% tables and equations etc.

%%===========================================================================================%%
%% If you are submitting to one of the Nature Portfolio journals, using the eJP submission   %%
%% system, please include the references within the manuscript file itself. You may do this  %%
%% by copying the reference list from your .bbl file, paste it into the main manuscript .tex %%
%% file, and delete the associated \verb+\bibliography+ commands.                            %%
%%===========================================================================================%%

\bibliography{sn-bibliography}% common bib file
%% if required, the content of .bbl file can be included here once bbl is generated
%%\input sn-article.bbl

\end{document}